\newtheorem{lemma}{Lemma}
\newtheorem{theorem}[lemma]{Theorem}
\theoremstyle{definition}
\newtheorem{remark}[lemma]{Remark}
\newtheorem{definition}[lemma]{Definition}
\newcommand{\Diff}{\mathrm{Diff}}
\newcommand{\Ham}{\mathrm{Ham}}
\newcommand{\id}{\mathrm{id}}
\newcommand{\Iso}{\mathrm{Iso}}
\newcommand{\osc}{\mathrm{osc\, }}
\newcommand{\R}{\mathbb{R}}
\newcommand{\supp}{\mathrm{supp \, }}
\newcommand{\Symp}{\mathrm{Symp}}
\begin{document}
\thispagestyle{plain}

\title{The symplectic displacement energy}

\author{Augustin Banyaga$^\dagger$}
\let\thefootnote\relax\footnote{$^\dagger$Corresponding author: banyaga@math.psu.edu}
\address{Department of Mathematics \\
         Penn State University \\
         University Park, PA 16802}
\email{banyaga@math.psu.edu}

\author{David E. Hurtubise}
\address{Department of Mathematics and Statistics\\
         Penn State Altoona\\
         Altoona, PA 16601-3760}
\email{Hurtubise@psu.edu}

\author{Peter Spaeth}
\address{Department of Mathematics and Statistics\\
         Penn State Altoona\\
         Altoona, PA 16601-3760}
\email{spaeth@psu.edu}

\keywords{Symplectic displacement energy, Hofer-like metric, Hofer metric, Calabi invariant, flux, strong symplectic homeomorphism}

\subjclass[2010]{Primary: 53D35 Secondary: 57R17}

\begin{abstract}
We define the symplectic displacement energy of a non-empty subset of a compact symplectic
manifold as the infimum of the Hofer-like norm \cite{BanAHo} of symplectic diffeomorphisms that
displace the set. We show that this energy (like the usual displacement energy defined using
Hamiltonian diffeomorphisms) is a strictly positive number on sets with non-empty interior.
As a consequence we prove a result justifying the introduction of the notion of strong symplectic homeomorphisms \cite{BanOnt}.
\end{abstract}

\maketitle


\section{Statement of results}

In \cite{HofOnt}, Hofer defined a norm $\| \cdot \|_H$ on the group $\Ham(M,\omega)$ of compactly supported Hamiltonian diffeomorphisms of a symplectic manifold $(M,\omega)$.

For a non-empty subset $A\subset M$, he introduced the notion of the {\bf displacement energy} $e(A)$ of A:
	\[ e(A) = \inf\{ \|\phi\|_H \mid \phi \in \Ham(M,\omega), \phi(A)\cap A =\emptyset\}. \]
The displacement energy is defined to be $+\infty$ if no compactly supported Hamiltonian diffeomorphism displaces $A$.

Eliashberg and Polterovich \cite{EliBii} proved the following result.

\begin{theorem}\label{theorem:eliashberg-polterovich}
For any non-empty open subset $A$ of $M$, $e(A)$ is a strictly positive number.
\end{theorem}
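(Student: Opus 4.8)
The plan is to reduce the statement to the model case of a round ball and then to apply an energy--capacity inequality. If $A$ admits no compactly supported Hamiltonian diffeomorphism displacing it, then $e(A)=+\infty$ and there is nothing to prove, so assume $A$ is displaceable. Since $A$ is open and non-empty, Darboux's theorem provides a symplectic embedding $\iota\colon B^{2n}(r)\hookrightarrow A$ of a standard Euclidean ball of some radius $r>0$, where $\dim M=2n$. The displacement energy is monotone under inclusion: any $\phi\in\Ham(M,\omega)$ with $\phi(A)\cap A=\emptyset$ also satisfies $\phi(\iota(B^{2n}(r)))\cap\iota(B^{2n}(r))=\emptyset$, so $e(A)\ge e(\iota(B^{2n}(r)))$. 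Hence it suffices to bound $e$ from below on a symplectically embedded ball.

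Next I would invoke a symplectic capacity $c$ --- for definiteness the Hofer--Zehnder capacity $c_{HZ}$, although the Gromov width would serve equally well --- which is monotone, conformal, and normalized so that $c(B^{2n}(r))=\pi r^{2}$. The crux is the energy--capacity inequality: for every displaceable open set $U$ in the compact manifold $M$ one has $c(U)\le C\,e(U)$ for a universal constant $C>0$ (one may take $C=1$, or $C=2$, depending on normalization conventions). Granting this and combining it with the reduction above,
\[
 e(A)\ \ge\ e\big(\iota(B^{2n}(r))\big)\ \ge\ \tfrac{1}{C}\,c\big(\iota(B^{2n}(r))\big)\ =\ \tfrac{1}{C}\,\pi r^{2}\ >\ 0,
\]
which is the assertion.

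The main obstacle is establishing the energy--capacity inequality; this is where symplectic rigidity enters and where essentially all of the work lies. I would argue by contradiction. Suppose some $\phi\in\Ham(M,\omega)$ displaces $U$ and is the time-one map of a compactly supported Hamiltonian $H$ whose total oscillation is smaller than $c_{HZ}(U)$. By the very definition of the Hofer--Zehnder capacity one may then choose an autonomous Hamiltonian $K\ge 0$ supported in $U$, equal to its maximum $\|K\|_{\infty}$ on a non-empty open set, with $\|K\|_{\infty}$ strictly between the total oscillation of $H$ and $c_{HZ}(U)$, and admitting no non-constant periodic orbit of period at most one. A now-standard variational argument --- which can be carried out via the Hofer--Zehnder action principle, via Gromov's pseudoholomorphic curves following Lalonde--McDuff, or via spectral invariants in Hamiltonian Floer homology --- then forces the Hamiltonian isotopy built from $K$ and $H$ to carry a non-constant fast periodic orbit, contradicting the choice of $K$. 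With the inequality in hand, the classical evaluation $c_{HZ}(B^{2n}(r))=\pi r^{2}$ together with the Darboux reduction of the first paragraph finishes the proof. I expect the delicate point to be the compactness and a priori estimates underlying this variational step (and the bookkeeping of the constant $C$), not the elementary reductions around it.
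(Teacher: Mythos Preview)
Your approach is correct but differs substantially from the one the paper follows. The paper does not prove Theorem~\ref{theorem:eliashberg-polterovich} itself---it cites Eliashberg--Polterovich---but its proof of Theorem~\ref{theorem:displacement-positive} is an explicit adaptation of their argument, so the intended method is visible. That argument is purely group-theoretic on $\Ham(M,\omega)$: choose $\phi,\psi\in\Ham(M,\omega)$ supported in $A$ with $[\phi,\psi]\neq\id$ (Proposition~1.5.B); for any $h$ displacing $A$, the commutator $\theta=[h,\phi^{-1}]$ satisfies $[\theta,\psi]=[\phi,\psi]$, and by bi-invariance plus the triangle inequality $\|[\phi,\psi]\|_H\le 2\|\theta\|_H\le 4\|h\|_H$. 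Hence $e(A)\ge\tfrac14\|[\phi,\psi]\|_H>0$, the final strict inequality being non-degeneracy of the Hofer norm.

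Your route through a Darboux ball and the energy--capacity inequality is a legitimate alternative and has the advantage of producing an explicit geometric lower bound $e(A)\ge\pi r^{2}/C$, rather than the less tangible $\tfrac14\|[\phi,\psi]\|_H$. The trade-off is that you push all the difficulty into the energy--capacity inequality, which on a general closed manifold requires Lalonde--McDuff's non-squeezing machinery or Floer-theoretic spectral invariants---considerably heavier than what the commutator proof invokes. The Eliashberg--Polterovich argument reduces everything to the single hard input that $\|\cdot\|_H$ is non-degenerate, using only bi-invariance and the triangle inequality on top of that; this minimality of hypotheses is precisely why the paper can transport the argument to the Hofer-like norm $\|\cdot\|_{HL}$ (where bi-invariance holds only up to a constant) with so little modification.
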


It is easy to see that if $A$ and $B$ are non-empty subsets of $M$ such that 
$A \subset B$, then $e(A) \leq e(B)$, and that $e$ is a symplectic invariant.
That is,
	\[ e(f(A)) = e(A) \]
for all $f \in \Symp(M,\omega) = \{\phi \in \Diff(M) \mid \phi^\ast \omega = \omega \}$.
This follows  from the fact that $\|f \circ \phi \circ f^{-1}\|_H = \|\phi\|_H$.
 
In \cite{BanAHo}, a Hofer-like metric $\| \cdot \|_{HL}$ was constructed on the group $\Symp_0(M,\omega)$
of all symplectic diffeomorphisms of a compact symplectic manifold $(M,\omega)$ that are isotopic to
the identity. It was proved recently by Buss and Leclercq \cite{BusPse} that the restriction of
$\| \cdot \|_{HL}$ to $\Ham(M,\omega)$ is a metric equivalent to the Hofer metric.

\smallskip
Let us now propose the following definition.

\begin{definition}\label{symplecticdisplacement}
The {\bf symplectic displacement energy} $e_s(A)$ of a non-empty subset $A \subset M$ is defined
to be:
	\[ e_s(A) = \inf\{ \|h\|_{HL} \mid h \in \Symp_0(M,\omega),  h(A)\cap A = \emptyset\} \]
if some element of $\Symp_0(M,\omega)$ displaces $A$, and $+\infty$ if no element of
$\Symp_0(M,\omega)$ displaces $A$.
\end{definition}

\noindent
Clearly, if $A$ and $B$ are non-empty subsets of $M$ such that $A\subset B$, then 
$e_s(A) \leq e_s(B)$.

\smallskip
The goal of this paper is to prove the following result.

\begin{theorem}\label{theorem:displacement-positive}
For any closed symplectic manifold $(M,\omega)$, the symplectic displacement energy of any subset $A \subset M$ with non-empty interior satisfies $e_s(A)>0$.
\end{theorem}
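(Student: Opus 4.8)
The plan is to deduce the theorem from the Eliashberg--Polterovich result (Theorem~\ref{theorem:eliashberg-polterovich}) by splitting a symplectic diffeomorphism of small Hofer-like norm into a $C^0$-small symplectic factor coming from a harmonic flow and a Hamiltonian factor of small Hofer norm. Fix the Riemannian metric $g$ implicit in the construction of $\|\cdot\|_{HL}$, together with the resulting Hodge decomposition of closed $1$-forms. Since $A$ has non-empty interior, choose $p\in M$ and $r>0$ with $B_g(p,r)\subset\mathrm{int}(A)$; by the monotonicity remark following Definition~\ref{symplecticdisplacement} it is enough to produce $\delta>0$ such that no $h\in\Symp_0(M,\omega)$ with $\|h\|_{HL}<\delta$ displaces $B_g(p,r)$, for then $e_s(A)\ge e_s(B_g(p,r))\ge\delta$.

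So suppose $h$ displaces $B_g(p,r)$ with $\|h\|_{HL}<\delta$, and pick a symplectic isotopy $\{\phi_t\}$ from $\id$ to $h$ with length $\int_0^1(\|\mathcal H_t\|+\osc U_t)\,dt$ small (by definition of $\|\cdot\|_{HL}$ this is within a fixed multiple of $\|h\|_{HL}$, which I absorb into $\delta$), where $\iota_{\dot\phi_t\circ\phi_t^{-1}}\omega=\mathcal H_t+dU_t$ is the Hodge decomposition into its harmonic part $\mathcal H_t$ and exact part $dU_t$, and $\|\cdot\|$ is the fixed norm on harmonic $1$-forms. I would then let $Y_t$ be the symplectic vector field with $\iota_{Y_t}\omega=\mathcal H_t$, let $\{\psi_t\}$ be its flow from $\id$, and consider $g_t=\psi_t^{-1}\circ\phi_t$. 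The composition rule for isotopy generators, together with $\psi_t^\ast\omega=\omega$, gives $\iota_{\dot g_t\circ g_t^{-1}}\omega=d(U_t\circ\psi_t)$, so $\{g_t\}$ is a Hamiltonian isotopy with $g_1=\psi_1^{-1}\circ h$ and generating functions $U_t\circ\psi_t$; since $\psi_t$ is a diffeomorphism, $\osc(U_t\circ\psi_t)=\osc U_t$, whence $\|g_1\|_H\le\int_0^1\osc U_t\,dt<\delta$. For the other factor, because the space $\mathcal H^1$ of harmonic $1$-forms is finite dimensional and $M$ is compact, nondegeneracy of $\omega$ yields a constant $C=C(M,\omega,g)$ with $\sup_M|Y_t|_g\le C\|\mathcal H_t\|$ for all $t$; hence $\psi_1$ displaces every point of $M$ a $g$-distance less than $C\int_0^1\|\mathcal H_t\|\,dt<C\delta$.

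To close the argument I would impose $\delta<r/(2C)$, so that $\psi_1$ moves every point a $g$-distance less than $r/2$, and check that $g_1=\psi_1^{-1}\circ h$ then displaces $B_g(p,r/2)$: if some $y$ lay in $g_1(B_g(p,r/2))\cap B_g(p,r/2)$, then $\psi_1(y)\in h(B_g(p,r/2))\subseteq h(B_g(p,r))$ while $d_g(\psi_1(y),p)\le d_g(\psi_1(y),y)+d_g(y,p)<r/2+r/2=r$, putting $\psi_1(y)$ in $B_g(p,r)$ and contradicting $h(B_g(p,r))\cap B_g(p,r)=\emptyset$. Now Theorem~\ref{theorem:eliashberg-polterovich} applies to the non-empty open set $B_g(p,r/2)$ and the Hamiltonian diffeomorphism $g_1$, giving $\|g_1\|_H\ge e(B_g(p,r/2))>0$; combined with $\|g_1\|_H<\delta$ this is impossible once $\delta\le e(B_g(p,r/2))$. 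Thus $\delta:=\min\{r/(2C),\,e(B_g(p,r/2))\}$ works, and $e_s(A)>0$.

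The step I expect to require the most care is this decomposition and its two a priori estimates: verifying that $\{g_t\}$ is Hamiltonian with the same oscillations as the $U_t$, and — the genuinely new point compared with the Hamiltonian theory — bounding the $C^0$-size of the harmonic time-one map $\psi_1$ by $\int_0^1\|\mathcal H_t\|\,dt$, which rests on the finite-dimensionality of $\mathcal H^1$ and compactness of $M$. Once these are in hand, the $C^0$-stability of the displacement property and Eliashberg--Polterovich's theorem finish everything, and the various factors of $2$ coming from the precise symmetrization in the definition of $\|\cdot\|_{HL}$ only rescale $\delta$. (When $H^1(M;\R)=0$ there is no harmonic part, $\psi_t\equiv\id$, $h$ is already Hamiltonian, and the statement is immediate from Theorem~\ref{theorem:eliashberg-polterovich}.)
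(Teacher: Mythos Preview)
Your argument is correct and gives a genuinely different proof from the paper's.  The paper follows Polterovich's commutator scheme: it fixes Hamiltonians $\phi,\psi$ supported in $A$ with $[\phi,\psi]\neq\id$, observes that for any displacing $h$ the commutator $\theta=[h,\phi^{-1}]$ lies in $\Ham(M,\omega)$ and satisfies $[\theta,\psi]=[\phi,\psi]$, and then bounds $\|[\theta,\psi]\|_H\le 2\|\theta\|_H\le 2\lambda\|\theta\|_{HL}\le 2\lambda(k+1)\|h\|_{HL}$ using the Buss--Leclercq equivalence and the paper's own Theorem~\ref{conjinequality} (equivalence of $\|\cdot\|_{HL}$ with its conjugates).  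You instead factor an almost--minimizing isotopy for $h$ as a harmonic flow $\psi_1$ composed with a Hamiltonian $g_1$, use finite-dimensionality of $\mathcal H^1$ to make $\psi_1$ $C^0$--small, and then observe that a $C^0$--small perturbation of a displacing map still displaces a slightly smaller ball, so Eliashberg--Polterovich applies directly to $g_1$.  The trade-off: your route is more elementary and self-contained---it needs neither Buss--Leclercq nor Theorem~\ref{conjinequality}, only Theorem~\ref{theorem:eliashberg-polterovich}---and yields the fairly explicit lower bound $e_s(A)\ge\min\{r/(2C),\,e(B_g(p,r/2))\}$ up to the symmetrization factor; the paper's route, on the other hand, parallels the classical Hamiltonian argument word for word and showcases Theorem~\ref{conjinequality} (invariance of $d_{HL}$ up to a constant) as the correct substitute for bi-invariance, which is of independent interest.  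Your verification that $\{g_t\}$ is Hamiltonian with $\osc(U_t\circ\psi_t)=\osc U_t$ and that $d_g(\psi_1(x),x)\le C\int_0^1|\mathcal H_t|\,dt$ are both routine and correctly identified as the only places requiring care.
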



\section{The Hofer norm $\| \cdot \|_H$ and the Hofer-like norm $\| \cdot \|_{HL}$} 

\subsection{Symp$_\mathbf{0}\mathbf{(M,\omega)}$ and Ham$\mathbf{(M,\omega)}$}
Let $\Iso(M,\omega)$ be the set of all compactly supported symplectic isotopies of a symplectic manifold $(M,\omega)$.
A compactly supported symplectic isotopy $\Phi \in \Iso(M,\omega)$ is a smooth map $\Phi: M \times [0,1] \to M$
such that for all $t \in [0,1]$, if we denote by $\phi_t(x) = \Phi(x,t)$, then $\phi_t \in 
\Symp(M,\omega)$ is a symplectic diffeomorphism
with compact support, and $\phi_0 = \id$. We denote by $\Symp_0(M,\omega)$ the set of all time-$1$ maps of compactly supported
symplectic isotopies.

Isotopies  $\Phi = \{ \phi_t \}$ are in one-to-one correspondence
with families of smooth vector fields $\{\dot{\phi}_t\}$ defined by
	\[  \dot {\phi}_t (x) = \frac {d \phi_t}{dt} (\phi_t^{-1}(x)). \]
If $\Phi \in \Iso(M,\omega)$, then the one-form $i(\dot {\phi}_t )\omega$ such that
	\[ i(\dot {\phi}_t )\omega (X) = \omega (\dot {\phi}_t,X) \]
for all vector fields $X$ is closed. 
If for all $t$ the 1-form $i(\dot {\phi}_t )\omega$ is exact, that is, there exists a smooth
function $F:M \times [0,1] \rightarrow \mathbb{R}$, $F(x,t)= F_t(x)$, with compact supports such
that $i(\dot {\phi}_t )\omega = dF_t$, then the isotopy $\Phi$ is called a Hamiltonian isotopy and will be denoted by $\Phi_F$. We define the group $\Ham(M,\omega)$
of Hamiltonian diffeomorphisms as the set of time-one maps of Hamiltonian isotopies.

For each $\Phi = \{ \phi_t\} \in \Iso(M,\omega)$, the mapping
	\[ \Phi \mapsto \left[ \int_0^1 (i(\dot {\phi}_t) \omega)dt \right], \]
where $[\alpha]$ denotes the cohomology class of a closed form $\alpha$, induces a well defined map
$\tilde {S}$ from the universal cover of $\Symp_0(M,\omega)$ to the first de Rham cohomology group
$H^1(M,\mathbb{R})$. This map is called the {\bf Calabi invariant} (or the {\bf flux}). It is a surjective group homomorphism.
Let $\Gamma \subset H^1(M,\mathbb{R})$ be the image by $\tilde S$ of the fundamental group of
$\Symp_0(M,\omega)$. We then get a surjective homomorphism
	\[ S: \Symp_0(M,\omega) \to H^1(M,\mathbb{R})/{\Gamma}. \]
The kernel of this homomorphism is the group $\Ham(M,\omega)$ \cites{BanSur, BanThe}.


\subsection{The Hofer norm}
Hofer \cite{HofOnt} defined the length $l_H$ of a Hamiltonian isotopy $\Phi_F$ as
	\[ l_H(\Phi_F)  =  \int_0^1 (\osc F(x,t))\ dt, \]
where the oscillation of a function $f:M \to \R$ is 
	\[ \osc(f) = \max_{x \in M}(f)- \min_{x \in M}(f). \]
For $\phi \in \Ham(M,\omega)$, the {\bf Hofer norm} of $\phi$ is
	\[ \|\phi\|_H = \inf \{ l_H(\Phi_F)\}, \]
where the infimum is taken over all Hamiltonian isotopies $\Phi_F$ with time-one map equal to $\phi$, i.e.\ $\phi_{F,1} = \phi$.

The Hofer distance $d_H(\phi, \psi)$ between two Hamiltonian diffeomorphisms $\phi$ and $\psi$ is
\begin{align*}
d_H(\phi, \psi) = \|\phi \circ \psi^{-1}\|_H.
\end{align*}
This distance is bi-invariant.
This property was used in \cite{EliBii} to prove Theorem \ref{theorem:eliashberg-polterovich}.


\subsection{The Hofer-like norm}
Now let $(M,\omega)$ be a compact symplectic manifold without boundary, on which we fix a Riemannian metric $g$.
For each $\Phi = \{ \phi_t \} \in \Iso(M,\omega)$, we consider the Hodge decomposition
\cite{WarFou} of the $1$-form $i(\dot {\phi}_t) \omega$ as
	\[ i(\dot {\phi}_t) \omega  =  \mathcal{H}_t  + du_t, \]
where $\mathcal{H}_t$ is a harmonic $1$-form.
The forms $\mathcal{H}_t$ and $u_t$ are unique and depend smoothly on $t$.

For $\Phi \in \Iso(M,\omega)$, define
	\[ l_0 (\Phi)  =  \int_0^1 (|\mathcal{H}_t| + \osc(u(x,t)))\ dt, \]
where $|\mathcal{H}_t|$ is a norm on the finite dimensional vector space of 
harmonic 1-forms.
We let
	\[ l(\phi)  = \frac{1}{2}( l_0(\Phi) + l_0(\Phi^{-1})), \]
where $\Phi^{-1} = \{ \phi_t^{-1} \}$.

For each $\phi \in \Symp_0(M,\omega)$, let
	\[ \|\phi\|_{HL}  =   \inf \{l(\phi)\}, \]
where the infimum is taken over all symplectic isotopies $\Phi$ with $\phi_1 = \phi$.

\smallskip\noindent
The following result was proved in \cite{BanAHo}.

\begin{theorem}
For any closed symplectic manifold $(M,\omega)$, $\| \cdot \|_{HL}$ is a norm on
$\Symp_0(M,\omega)$.
\end{theorem}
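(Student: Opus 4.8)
The plan is to verify, in increasing order of difficulty, the four defining properties of a norm on the group $\Symp_0(M,\omega)$: non-negativity, invariance under inversion, the triangle inequality, and non-degeneracy. Non-negativity is immediate, since for every isotopy $\Phi$ the integrand $|\mathcal{H}_t| + \osc(u_t)$ is non-negative, so $l_0(\Phi) \ge 0$ and hence $\|\phi\|_{HL} \ge 0$. Invariance under inversion, $\|\phi\|_{HL} = \|\phi^{-1}\|_{HL}$, is forced by the symmetrization built into the definition of $l$: the assignment $\Phi \mapsto \Phi^{-1} = \{\phi_t^{-1}\}$ is a bijection from the isotopies ending at $\phi$ onto the isotopies ending at $\phi^{-1}$, and it satisfies $l(\Phi^{-1}) = \tfrac12\bigl(l_0(\Phi^{-1}) + l_0(\Phi)\bigr) = l(\Phi)$ because $(\Phi^{-1})^{-1} = \Phi$; taking infima over the two families yields the identity.

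For the triangle inequality $\|\phi \circ \psi\|_{HL} \le \|\phi\|_{HL} + \|\psi\|_{HL}$ I would begin from nearly length-minimizing isotopies $\Phi$ and $\Psi$ for $\phi$ and $\psi$ and compose them into $\{\phi_t \circ \psi_t\}$, which ends at $\phi \circ \psi$. A computation with the generating families shows that the one-form attached to the composition is $i(\dot\phi_t)\omega + (\phi_t^{-1})^\ast\, i(\dot\psi_t)\omega$. Feeding in the two Hodge decompositions, the harmonic part of the composition is $\mathcal{H}^{\phi}_t + \mathcal{H}^{\psi}_t$: here one uses that $\phi_t$, being isotopic to the identity, acts trivially on $H^1(M,\R)$, so that $(\phi_t^{-1})^\ast \mathcal{H}^{\psi}_t$ is cohomologous, hence equal up to an exact form $dv_t$, to $\mathcal{H}^{\psi}_t$. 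Its norm is then at most $|\mathcal{H}^{\phi}_t| + |\mathcal{H}^{\psi}_t|$ by the triangle inequality on the finite-dimensional space of harmonic forms, while the oscillation of the exact part is handled by subadditivity of $\osc$ and its invariance under composition with a diffeomorphism. The genuinely delicate point is the correction $(\phi_t^{-1})^\ast \mathcal{H}^{\psi}_t = \mathcal{H}^{\psi}_t + dv_t$: controlling $\osc(v_t)$ and checking, upon passing to the infimum, that it does not spoil the estimate is the technical heart of this step, and the reason the harmonic directions make the argument more subtle than in the purely Hamiltonian Hofer case.

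The principal obstacle, however, is non-degeneracy: one must show $\|\phi\|_{HL} = 0 \Rightarrow \phi = \id$. I would argue contrapositively. If $\phi \ne \id$, pick a point $p$ with $\phi(p) \ne p$ and, by continuity, a small embedded ball $B$ around $p$ with $\phi(B) \cap B = \emptyset$, so that $\phi$ displaces $B$. The crux is then an energy--capacity inequality: there should be a symplectic capacity $c$ (a Gromov-type width) and a positive constant such that $\|h\|_{HL} \ge c(B)$ for every $h \in \Symp_0(M,\omega)$ displacing $B$. Since $c(B) > 0$ for a ball, this forces $\|\phi\|_{HL} > 0$ and closes the argument.

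Establishing such an inequality for the full Hofer-like norm, rather than merely for its restriction to $\Ham(M,\omega)$ where one could lean on classical Hofer theory, is exactly where soft arguments break down and genuine symplectic rigidity---pseudoholomorphic curves or a variational capacity---must be invoked; I expect this to be the deepest ingredient of the theorem. A complication specific to this setting is that an isotopy $\Phi$ realizing $\phi$ need not be Hamiltonian, so its harmonic part $\mathcal{H}_t$ contributes to the motion and cannot be ignored: the lower bound on the displacement energy must therefore be extracted from $l_0(\Phi)$ while simultaneously controlling both the oscillation term and the harmonic term. Carrying out that simultaneous estimate, and thereby the energy--capacity inequality underlying non-degeneracy, is the step I anticipate will require the most work.
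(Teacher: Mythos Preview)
The paper does not prove this theorem: it is quoted verbatim from \cite{BanAHo} and no argument is given here, so there is no in-paper proof to compare your outline against. That said, two substantive comments on your sketch are worth making.

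For the triangle inequality you chose pointwise composition $\{\phi_t\circ\psi_t\}$ and then correctly identified the awkward correction term $dv_t$ coming from $(\phi_t^{-1})^\ast\mathcal{H}^\psi_t=\mathcal{H}^\psi_t+dv_t$. This difficulty is self-inflicted: if instead you \emph{concatenate} the isotopies (run $\Psi$ on $[0,\tfrac12]$, then $\{\phi_{2t-1}\circ\psi_1\}$ on $[\tfrac12,1]$), the generating vector field on the second half is exactly $\dot\phi_s$, so the Hodge decompositions of the two pieces are literally those of $\Phi$ and $\Psi$ and $l_0$ is additive on the nose. The same works for the inverse path, and the triangle inequality drops out without any estimate on $v_t$.

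Your plan for non-degeneracy---displace a ball $B$ and invoke an energy--capacity inequality $\|h\|_{HL}\ge c(B)>0$---is, amusingly, exactly the content of the \emph{main} theorem of the present paper (Theorem~\ref{theorem:displacement-positive}). Note that the proof of Theorem~\ref{theorem:displacement-positive} given here uses only the triangle inequality and symmetry of $\|\cdot\|_{HL}$, Theorem~\ref{conjinequality}, and the non-degeneracy of the \emph{Hofer} norm via Buss--Leclercq; it never invokes non-degeneracy of $\|\cdot\|_{HL}$ itself. So your proposed route is not circular and would in fact work, though it imports the Buss--Leclercq comparison theorem, which postdates \cite{BanAHo} and is heavier machinery than Banyaga's original argument required.
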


\begin{remark}
The norm $\| \cdot \|_{HL}$ depends on the choice of the Riemannian metric $g$ on $M$ and the
choice of the norm $|\cdot |$ on the space of harmonic 1-forms. However, different
choices for $g$ and $|\cdot |$ yield equivalent metrics.  See Section 3 of \cite{BanAHo} for more details.
\end{remark}


\subsection{Some equivalence properties}

Let $(M,\omega)$ be a compact symplectic manifold.  Buss and Leclercq have proved:
\begin{theorem}\cite{BusPse}
The restriction of the Hofer-like norm $\|\cdot\|_{HL}$ to $\Ham(M,\omega)$ is 
equivalent to the Hofer norm $\|\cdot\|_H$.
\end{theorem}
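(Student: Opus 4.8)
\emph{The plan.} Write $\|\cdot\|_{HL}$ and $\|\cdot\|_H$ for the two norms restricted to $\Ham(M,\omega)$, and aim to produce a constant $C>0$, depending only on $(M,\omega,g)$ and the chosen norm on harmonic $1$-forms, with
\[
 \|\phi\|_{HL}\ \le\ \|\phi\|_H\ \le\ C\,\|\phi\|_{HL}
 \qquad\text{for all }\phi\in\Ham(M,\omega).
\]
The first inequality is the easy one. Given a Hamiltonian isotopy $\Phi_F$ with $\phi_{F,1}=\phi$, the $1$-form $i(\dot\phi_t)\omega=dF_t$ is exact, so its Hodge decomposition has vanishing harmonic part and $u_t=F_t$ up to an additive constant; hence $l_0(\Phi_F)=\int_0^1\osc(F_t)\,dt=l_H(\Phi_F)$. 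The reversed isotopy $\Phi_F^{-1}$ is generated by the Hamiltonian $-F_t\circ\phi_t$, whose oscillation equals $\osc(F_t)$, so $l_0(\Phi_F^{-1})=l_H(\Phi_F)$ as well, giving $l(\Phi_F)=l_H(\Phi_F)$. Since the infimum defining $\|\phi\|_{HL}$ ranges over \emph{all} symplectic isotopies ending at $\phi$, a larger class than the Hamiltonian ones, we conclude $\|\phi\|_{HL}\le\inf_{\Phi_F}l(\Phi_F)=\|\phi\|_H$.

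\emph{The reverse inequality: splitting off the harmonic part.} I would start from an arbitrary symplectic isotopy $\Phi=\{\phi_t\}$ with $\phi_1=\phi$ and Hodge decomposition $i(\dot\phi_t)\omega=\mathcal{H}_t+du_t$. Let $\{\rho_t\}$ be the symplectic isotopy generated by the harmonic vector fields $Y_t$ defined by $i(Y_t)\omega=\mathcal{H}_t$, and set $\psi_t=\rho_t^{-1}\circ\phi_t$, so that $\psi_0=\id$ and $\phi_t=\rho_t\circ\psi_t$. Applying the composition formula for generating vector fields gives $(\rho_t)_*\dot\psi_t=\dot\phi_t-Y_t$, a vector field whose image under $i(\cdot)\omega$ is $du_t$; since $\rho_t$ is a symplectomorphism this yields $\dot\psi_t=X_{u_t\circ\rho_t}$, the Hamiltonian vector field of $u_t\circ\rho_t$. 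Thus $\{\psi_t\}$ is a Hamiltonian isotopy with
\[
 l_H(\{\psi_t\})=\int_0^1\osc(u_t\circ\rho_t)\,dt=\int_0^1\osc(u_t)\,dt\ \le\ l_0(\Phi)\ \le\ 2\,l(\Phi),
\]
using that $\rho_t$ is a diffeomorphism and that $l(\Phi)\ge\tfrac12 l_0(\Phi)$. Since $\|\cdot\|_H$ is a norm and $\phi=\rho_1\circ\psi_1$, we get $\|\phi\|_H\le\|\rho_1\|_H+2\,l(\Phi)$. Finally $\psi_1\in\Ham(M,\omega)$ and $\phi\in\Ham(M,\omega)$, so $\rho_1=\phi\circ\psi_1^{-1}\in\Ham(M,\omega)$ as well.

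\emph{The crux and the main obstacle.} Everything reduces to the estimate $\|\rho_1\|_H\le C'\int_0^1|\mathcal{H}_t|\,dt$ for the time-one map of the harmonic flow, under the standing hypothesis $\rho_1\in\Ham(M,\omega)$; this is where I expect the real difficulty to lie. The usable structure is that the space of harmonic $1$-forms is finite dimensional and that, for a fixed basis $\theta_1,\dots,\theta_k$ with dual harmonic symplectic fields $X_i$, one has $i([X_i,X_j])\omega=d(\theta_j(X_i))$ exact, so the $X_i$ commute modulo Hamiltonian vector fields. Consequently $\rho_1$ agrees, up to a Hamiltonian factor whose Hofer length is controlled by $\int_0^1|\mathcal{H}_t|\,dt$, with the time-one map of a single autonomous harmonic flow whose flux class is $\gamma=\int_0^1[\mathcal{H}_t]\,dt$. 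The constraint $\rho_1\in\Ham(M,\omega)$ forces $\gamma\in\Gamma$, and at this point I would invoke the discreteness of the flux group $\Gamma$ together with the finite dimensionality of the harmonic space to bound the Hofer norm of this autonomous endpoint linearly in $|\gamma|\le\int_0^1|\mathcal{H}_t|\,dt$. Combining this harmonic estimate with the bound of the previous paragraph gives $\|\phi\|_H\le C\,l(\Phi)$ for every symplectic isotopy $\Phi$ ending at $\phi$; taking the infimum over all such $\Phi$ yields $\|\phi\|_H\le C\|\phi\|_{HL}$ and completes the equivalence.
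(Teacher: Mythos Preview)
The paper does not give its own proof of this theorem: it is simply quoted from Buss--Leclercq \cite{BusPse} and then used as a black box. So there is nothing in the paper to compare your argument against; I can only assess the proposal on its own.

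Your easy inequality $\|\phi\|_{HL}\le\|\phi\|_H$ is correct and cleanly argued: for a Hamiltonian isotopy the harmonic part vanishes, $l(\Phi_F)=l_H(\Phi_F)$, and the $HL$-infimum runs over a larger class. Your decomposition $\phi_t=\rho_t\circ\psi_t$ with $\{\psi_t\}$ Hamiltonian of Hofer length $\le l_0(\Phi)$ is also standard and correct, and it is indeed the natural first move toward the reverse inequality.

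The genuine gap is exactly where you flag it. You need a \emph{uniform} constant $C'$ with $\|\rho_1\|_H\le C'\int_0^1|\mathcal H_t|\,dt$ whenever $\rho_1\in\Ham(M,\omega)$, and neither of the two ingredients you sketch is established. First, the claim that the time-dependent harmonic flow differs from an autonomous harmonic flow by a Hamiltonian factor whose Hofer length is controlled by $\int|\mathcal H_t|\,dt$ requires a quantitative commutator estimate; knowing that $[X_i,X_j]$ is Hamiltonian does not by itself bound the accumulated correction, since these terms enter at quadratic and higher order in the coefficients. Second, the step ``invoke discreteness of $\Gamma$ and finite dimensionality to bound the Hofer norm of the autonomous endpoint linearly in $|\gamma|$'' is not an argument: discreteness gives a lower bound on $|\gamma|$ for nonzero $\gamma\in\Gamma$, but says nothing about the size of $\|\exp(Z_\gamma)\|_H$, and one must still produce, for every $\gamma\in\Gamma$, an actual Hamiltonian isotopy to $\exp(Z_\gamma)$ of length $\lesssim|\gamma|$, uniformly. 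This is precisely the substantive content of the Buss--Leclercq paper, and your outline does not supply it.
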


We now prove the following.

\begin{theorem}\label{conjinequality}
Let $\phi \in \Symp_0(M,\omega)$. The norm
$$
h \mapsto \| \phi \circ h \circ \phi^{-1}\|_{HL}
$$
on $\Symp_0(M,\omega)$ is equivalent to the norm $\|\cdot\|_{HL}$.
\end{theorem}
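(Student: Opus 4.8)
The plan is to show directly that conjugation by a fixed $\phi \in \Symp_0(M,\omega)$ distorts the length functional $l$ by at most a multiplicative constant depending only on $\phi$ (and the fixed auxiliary data $g$ and $|\cdot|$). First I would fix $\phi$ and a symplectic isotopy $\Psi = \{\psi_t\}$ from the identity to $\phi$, so that $\phi \circ h \circ \phi^{-1}$ is the time-one map of the isotopy $\{\phi \circ h_t \circ \phi^{-1}\}$ whenever $\Phi = \{h_t\}$ is an isotopy from the identity to $h$. The generating data transform by the standard rule: if $X_t = \dot{h}_t$ is the time-dependent vector field of $\Phi$, then the conjugated isotopy has vector field $\phi_* X_t$, and one computes $i(\phi_* X_t)\omega = (\phi^{-1})^*\big(i(X_t)\omega\big)$ since $\phi$ is symplectic. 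Thus, writing the Hodge decomposition $i(X_t)\omega = \mathcal{H}_t + du_t$, we get $i(\phi_*X_t)\omega = (\phi^{-1})^*\mathcal{H}_t + d(u_t \circ \phi^{-1})$. The oscillation term is immediately invariant, since $\osc(u_t \circ \phi^{-1}) = \osc(u_t)$ because $\phi^{-1}$ is a diffeomorphism of $M$.

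The subtle point is the harmonic part: $(\phi^{-1})^*\mathcal{H}_t$ is closed but in general \emph{not} harmonic with respect to $g$, so it must be re-decomposed as $(\phi^{-1})^*\mathcal{H}_t = \mathcal{K}_t + dv_t$ with $\mathcal{K}_t$ harmonic. The new harmonic representative $\mathcal{K}_t$ is the harmonic projection of $(\phi^{-1})^*\mathcal{H}_t$, and since $(\phi^{-1})^*$ acts on the finite-dimensional space $H^1(M,\R)$ (it represents the same de Rham class, and the harmonic representative depends only on the class), we have $[\mathcal{K}_t] = (\phi^{-1})^*[\mathcal{H}_t]$ as cohomology classes, hence $\mathcal{K}_t = \mathcal{H}((\phi^{-1})^*\mathcal{H}_t)$ where $\mathcal{H}(\cdot)$ denotes harmonic projection. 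The key estimate is then that the linear map on the finite-dimensional space of harmonic $1$-forms sending $\mathcal{H}_t \mapsto \mathcal{K}_t$ is bounded: there is a constant $C_1 = C_1(\phi,g,|\cdot|)$ with $|\mathcal{K}_t| \le C_1 |\mathcal{H}_t|$. For the extra exact part $v_t$, I would note $dv_t = (\phi^{-1})^*\mathcal{H}_t - \mathcal{K}_t$, and both sides depend linearly and continuously on $\mathcal{H}_t$; since $v_t$ can be chosen (say, normalized to have zero mean) depending linearly and continuously on $\mathcal{H}_t$ via the Green's operator, we get $\osc(v_t) \le C_2 |\mathcal{H}_t|$ for a constant $C_2 = C_2(\phi, g, |\cdot|)$.

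Putting these together: the length of the conjugated isotopy satisfies
\[
l_0(\{\phi \circ h_t \circ \phi^{-1}\}) = \int_0^1 \big(|\mathcal{K}_t| + \osc(u_t \circ \phi^{-1} + v_t)\big)\, dt \le \int_0^1 \big(C_1 |\mathcal{H}_t| + \osc(u_t) + \osc(v_t)\big)\, dt \le C \, l_0(\Phi),
\]
where $C = \max(C_1 + C_2, 1)$; here I used $\osc(u_t \circ \phi^{-1} + v_t) \le \osc(u_t) + \osc(v_t)$ and $\osc(u_t \circ \phi^{-1}) = \osc(u_t)$. The same bound applies to the inverse isotopy $\{\phi \circ h_t^{-1} \circ \phi^{-1}\}$ with the same constant, so averaging gives $l(\phi \circ h \circ \phi^{-1}) \le C\, l(h)$ at the level of isotopies, and taking the infimum over all isotopies $\Phi$ from the identity to $h$ yields $\|\phi \circ h \circ \phi^{-1}\|_{HL} \le C \|h\|_{HL}$. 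Applying the same argument with $\phi^{-1}$ in place of $\phi$ to the element $\phi \circ h \circ \phi^{-1}$ gives the reverse inequality $\|h\|_{HL} \le C' \|\phi \circ h \circ \phi^{-1}\|_{HL}$, establishing equivalence.

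I expect the main obstacle to be the careful bookkeeping of the harmonic re-projection: one must verify that $(\phi^{-1})^*$ genuinely descends to a well-defined linear endomorphism of the (finite-dimensional) harmonic space through cohomology, and that the correction potential $v_t$ can be chosen to depend continuously and linearly on $\mathcal{H}_t$ so that its oscillation is controlled. Both follow from Hodge theory (the Green's operator and harmonic projection are bounded linear operators, and all norms on a finite-dimensional space are equivalent), but stating them cleanly — and making sure the constants depend only on $\phi$ and the fixed auxiliary data, uniformly in $t$ and independently of the isotopy $\Phi$ — is where the care is needed. Everything else is the formal transformation rule $i(\phi_* X)\omega = (\phi^{-1})^* i(X)\omega$ and the trivial invariance of $\osc$ under pullback by diffeomorphisms.
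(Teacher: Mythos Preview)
Your argument is correct and follows the same overall architecture as the paper's proof: conjugate the isotopy, use $i(\phi_*X_t)\omega = (\phi^{-1})^*(i(X_t)\omega)$, re-decompose, and bound the correction term linearly in $|\mathcal{H}_t|$ so that the length is distorted by at most a multiplicative constant; then apply the same to $\phi^{-1}$ for the reverse inequality.

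There is one difference worth flagging. You re-project $(\phi^{-1})^*\mathcal{H}_t = \mathcal{K}_t + dv_t$ and bound $|\mathcal{K}_t|\le C_1|\mathcal{H}_t|$ abstractly. The paper instead observes that because $\phi\in\Symp_0(M,\omega)$ is isotopic to the identity, $(\phi^{-1})^*$ acts trivially on $H^1(M,\R)$, so in fact $\mathcal{K}_t=\mathcal{H}_t$ and your $C_1$ is $1$. The paper then writes the exact correction explicitly via Cartan's formula along a chosen isotopy $\{\phi_s^{-1}\}$,
\[
(\phi^{-1})^*\mathcal{H}_t-\mathcal{H}_t \;=\; d\!\int_0^1(\phi_s^{-1})^*\,i(\dot\phi_s^{-1})\mathcal{H}_t\,ds,
\]
and bounds the oscillation of this primitive by $2\nu|\mathcal{H}_t|$ where $\nu$ is the operator norm of the resulting linear map $\mathcal{H}^1(M,g)\to C^\infty(M)$. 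Your appeal to the Green's operator achieves the same bound on $\osc(v_t)$ less explicitly. What your formulation buys is a slightly more general statement (it would go through for any $\phi\in\Symp(M,\omega)$, not just those in the identity component); what the paper's buys is an explicit constant $k=2\nu+1$ tied to a chosen isotopy of $\phi$.
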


\begin{remark}
We owe the statement of the above theorem to the referee of a previous version
of this paper.
\end{remark}

\begin{proof}
Let $\{h_t\}$ be an isotopy in $\Symp_0(M,\omega)$ from h to the identity, and let
\[ i(\dot {h}_t) \omega  =  \mathcal{H}_t  + du_t, \]
be the Hodge decomposition.  Then $\Psi = \{\phi \circ h_t \circ \phi^{-1}\}$ is an
isotopy from $\phi \circ h \circ \phi^{-1}$ to the identity and $\dot{\Psi}_t = 
\phi_\ast \dot{h}_t$. Therefore,
\[ 
i(\dot{\Psi}_t)\omega = (\phi^{-1})^\ast(i(\dot{h}_t)\phi^\ast\omega) = 
(\phi^{-1})^\ast(\mathcal{H}_t + du_t)
= (\phi^{-1})^\ast \mathcal{H}_t + d(u_t \circ \phi^{-1}).
\]

Let $\{\phi_s^{-1}\}$ be an isotopy from $\phi^{-1}$ to the identity, and let
$L_X = i_X d + d i_X$ be the Lie derivative in the direction $X$. Then
\[
\frac{d}{ds}((\phi^{-1}_s)^\ast \mathcal{H}_t) = 
(\phi^{-1}_s)^\ast(L_{\dot{\phi}^{-1}_s} \mathcal{H}_t) = 
d ((\phi^{-1}_s)^\ast i(\dot{\phi}_s^{-1})\mathcal{H}_t),
\]
where $\dot{\phi}_t^{-1}= (\frac{d}{dt} \phi_t^{-1}) \circ \phi_t$.
Integrating from $0$ to $1$ we get
\[
(\phi^{-1})^\ast \mathcal{H}_t - \mathcal{H}_t = d \alpha_t
\]
where
\[
\alpha_t = \int_0^1 ((\phi^{-1}_s)^\ast i(\dot{\phi}_s^{-1})\mathcal{H}_t)\ ds.
\]
Therefore,
\[
i(\dot{\Psi}_t)\omega = \mathcal{H}_t + d(u_t \circ \phi^{-1} + \alpha_t).
\]
Hence,
\begin{eqnarray*}
l_0(\Psi) & = & \int_0^1 \left( |\mathcal{H}_t| + \osc(u_t \circ \phi^{-1} + \alpha_t)\right) dt\\
& \leq & \int_0^1 \left( |\mathcal{H}_t| + \osc(u_t \circ \phi^{-1}) \right) dt + \int_0^1 
         \osc(\alpha_t)\ dt\\
& = & \int_0^1 \left( |\mathcal{H}_t| + \osc(u_t) \right) dt + \int_0^1 \osc (\alpha_t)\ dt\\
& = & l_0(\{h_t\}) + K
\end{eqnarray*}
where
\[
K = \int_0^1 \osc (\alpha_t)\ dt.
\]

\smallskip
Let us now do the same calculation for $\Psi^{-1} = \{\phi \circ h_t^{-1} \circ \phi^{-1}\}$.

\medskip
Since $\dot{h}_t^{-1}$ satisfies $\dot{h}^{-1}_t = -(h_t^{-1})_\ast \dot{h}_t$, the 
cohomology classes of $i(\dot{h}_t)\omega$ and $i(\dot{h}_t^{-1})\omega$ are opposite.
Since the Hodge decomposition is unique and the harmonic part of the first form is 
$\mathcal{H}_t$, the harmonic part of the second form is $-\mathcal{H}_t$. Therefore, there
is a smooth family of functions $v_t$ such that the Hodge decomposition for
$i(\dot{h}_t^{-1})\omega$ is
\[
i(\dot{h}_t^{-1})\omega = - \mathcal{H}_t + d v_t.
\]
The same calculation shows 
\[
i(\dot{\Psi}_t^{-1})\omega = -\mathcal{H}_t + d(v_t \circ \phi^{-1} - \alpha_t).
\]
Hence,
\[
l_0(\Psi^{-1}) \leq l_0(\{h_t^{-1}\}) + K.
\]

\bigskip
We will now estimate $K = \int_0^1 \osc (\alpha_t)\ dt$. Fix an isotopy $\{\phi_s^{-1}\}$ from 
$\phi^{-1}$ to the identity. Consider the continuous linear map
\[
\mathcal{L}_{\{\phi_s^{-1}\}}: \mathcal{H}^1(M,g) \rightarrow C^\infty(M)
\]
from the finite dimensional vector space of harmonic $1$-forms
given by
\[
\mathcal{L}_{\{\phi_s^{-1}\}}(\theta) = \int_0^1 ((\phi^{-1}_s)^\ast i(\dot{\phi}_s^{-1})\theta)\ ds.
\]
Let $\nu \geq 0$ be the norm of $\mathcal{L}_{\{\phi_s^{-1}\}}$ where the norm on 
$\mathcal{H}^1(M,g)$ is defined by the metric $g$ and $C^\infty(M)$ is given the sup norm.  Then
$|\mathcal{L}_{\{\phi_s^{-1}\}}(\theta)| \leq \nu |\theta|$.  In our case $\alpha_t = 
\mathcal{L}_{\{\phi_s^{-1}\}}(\mathcal{H}_t)$.
Therefore,
\[
|\alpha_t| \leq \nu |\mathcal{H}_t|
\]
and
\[
\osc(\alpha_t) \leq 2 |\alpha_t| \leq 2 \nu |\mathcal{H}_t|.
\]
This implies
\[
\osc(\alpha_t) \leq 2 \nu \left( |\mathcal{H}_t| + \osc(u_t) \right)
\text{ and }\
\osc(\alpha_t) \leq 2 \nu \left( |\mathcal{H}_t| + \osc(v_t) \right).
\]
Hence,
\[
K = \int_0^1 \osc(\alpha_t)\ dt \leq 2 \nu\, l_0(\{h_t\}),
\]
and
\[
K = \int_0^1 \osc(\alpha_t)\ dt \leq 2 \nu\, l_0(\{h_t^{-1}\}).
\]

\bigskip
Now recall that, 
\[
l_0(\Psi) \leq l_0(\{h_t\}) + K  \ \text{ and }\ l_0(\Psi^{-1}) \leq l_0(\{h_t^{-1}\}) + K.
\]
Therefore,
\begin{eqnarray*}
l(\Psi) & = & \frac{1}{2}\left(l_0(\Psi)+ l_0(\Psi^{-1})\right)\\
& \leq & \frac{1}{2}\left(l_0(\{h_t\}) + 2\nu\, l_0(\{h_t\}) + l_0(\{h_t^{-1}\}) +
         2\nu\, l_0(\{h_t^{-1}\}) \right)\\
&\leq & (2\nu + 1) l(\{h_t\}).
\end{eqnarray*}
Taking the infimum over the set $I(h)$ of all symplectic isotopies
from $h$ to the identity we get
\[
\inf_{I(h)} l(\Psi) \leq (2\nu + 1) \|h\|_{HL},
\]
and since 
\[
\|\phi \circ h \circ \phi^{-1}\|_{HL} \leq \inf_{I(h)} l(\Psi)
\]
we get 
\[
\|\phi \circ h \circ \phi^{-1} \|_{HL} \leq k \|h\|_{HL}
\]
with $k = 2\nu + 1$.

\medskip
We have shown that for every $\phi \in \Symp_0(M,\omega)$ there is a $k \geq 1$ (depending
on an isotopy $\{\phi_s\}$ from $\phi$ to the identity) such that the preceding 
inequality holds for all $h\in \Symp_0(M,\omega)$.
Applying this to $\phi^{-1}$ we see that there is an $k' \geq 1$ such that
\[
\|\phi^{-1} \circ h \circ \phi \|_{HL} \leq k' \|h\|_{HL}
\]
for all $h \in \Symp_0(M,\omega)$. Therefore, for any $h \in \Symp_0(M,\omega)$
we have 
\[
\|h\|_{HL} = \|\phi^{-1} \circ (\phi \circ h \circ \phi^{-1}) \circ \phi\|_{HL}
\leq k' \|\phi \circ h \circ \phi^{-1}\|_{HL}.
\]
That is,
\[
\frac{1}{k'} \|h\|_{HL} \leq \|\phi \circ h \circ \phi^{-1}\|_{HL} \leq k \|h\|_{HL}.
\]

\end{proof}

\begin{remark}
If $\phi$ is Hamiltonian and contained in a Weinstein chart, then
there is canonical isotopy $\{\phi_s\}$ with $\phi_1 = \phi$.  In this
case $k$ can be viewed as depending only on $\phi$.
\end{remark}


\section{Proof of the main result}
We will closely follow the proof given by Polterovich of Theorem 2.4.A in \cite{PolThe} that $e(A)>0$.
We will use without any change Proposition 1.5.B. 

\smallskip\noindent
{\bf Proposition 1.5.B.} \cite{PolThe} {\em
For any non-empty open subset $A$ of $M$, there exists a pair of Hamiltonian diffeomorphisms
$\phi$ and $\psi$ that are supported in $A$ and whose commutator $[\phi, \psi] = \psi^{-1}
\circ \phi^{-1} \circ \psi \circ \phi$ is not equal to the identity.}

\smallskip
For the sake of completeness we provide the following alternate proof of this proposition based on the transitivity lemmas in \cite{BanThe} (pages 29 and 109).
(For a proof of $k$-fold transitivity for symplectomorphisms see \cite{BooTra}.)

\begin{proof}
Let $U$ be an open subset of A such that $\overline U \subset A$. Pick three distinct points $a,b,c \in U$.
By the transitivity lemma of $\Ham(M,\omega)$, there exist $\phi, \psi \in \Ham(M,\omega)$ such that 
$\phi(a) = b$ and $\psi(b) = c$.  Moreover, we can choose $\phi$ and $\psi$ so that $\supp(\phi)$ and $\supp(\psi)$ are contained in small tubular neighborhoods $V$ and $W$ of distinct paths in $U$ joining $a$ to $b$ and
$b$ to $c$ respectively, and we can assume that $c \in U\backslash V$.
\begin{figure}[h]
\includegraphics{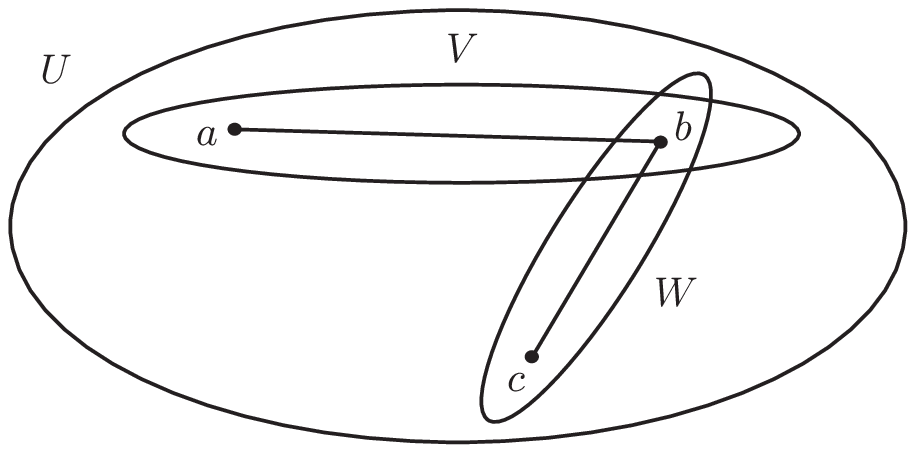}
\end{figure}

\noindent
Then $(\psi^{-1}\phi^{-1}\psi\phi)(a) = (\psi^{-1}\phi^{-1})(c) = \psi^{-1}(c) = b$.
Hence $[\phi, \psi] \neq $ id.

\end{proof}

\smallskip

We will say that a map $h$ \textbf{displaces} $A$ if $h(A) \cap A = \emptyset$.  Let us denote by $D(A)$ the set of all $h \in \Symp_0(M,\omega)$ that displace $A$. We note the following fact.  

\begin{lemma}
Let $\phi$ and $\psi$ be as in Proposition 1.5.B, and let 
$h \in D(A)$.  Then the commutator 
 \[
  \theta = [h, \phi^{-1}] = \phi\circ h^{-1}\circ \phi^{-1}\circ h
 \]
satisfies $[\phi,\psi] = [\theta,\psi]$.
\end{lemma}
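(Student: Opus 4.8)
The plan is to reduce the identity $[\phi,\psi]=[\theta,\psi]$ to a purely algebraic manipulation of commutators in $\Symp_0(M,\omega)$, using only that $\phi$ and $\psi$ are supported in $A$ and that $h$ displaces $A$. The key observation is that $h(A)\cap A=\emptyset$ is equivalent to $h^{-1}(A)\cap A=\emptyset$, and that conjugating something supported in $A$ by $h^{-1}$ produces a symplectomorphism supported in a set disjoint from $A$, hence one that commutes with every map supported in $A$.

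First I would write $\rho=h^{-1}\circ\phi^{-1}\circ h$, so that $\theta=\phi\circ\rho$. Since conjugation transports supports, i.e.\ $\supp(f\circ g\circ f^{-1})=f(\supp g)$ for any diffeomorphism $f$, we get $\supp\rho\subseteq h^{-1}(\supp\phi)\subseteq h^{-1}(A)$, and the displacement hypothesis gives $h^{-1}(A)\cap A=\emptyset$. Consequently $\rho$ has support disjoint from $A$, and therefore $\rho$ commutes with $\psi$, with $\phi$, and with the commutator $[\phi,\psi]$, the latter being supported in $\supp\phi\cup\supp\psi\subseteq A$.

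Then the proof is a short computation with the paper's convention $[a,b]=b^{-1}\circ a^{-1}\circ b\circ a$: expand
\[
[\theta,\psi]=\psi^{-1}\circ\theta^{-1}\circ\psi\circ\theta
=\psi^{-1}\circ\rho^{-1}\circ\phi^{-1}\circ\psi\circ\phi\circ\rho,
\]
move $\rho^{-1}$ to the far left using $\psi\circ\rho=\rho\circ\psi$, recognize the central block $\psi^{-1}\circ\phi^{-1}\circ\psi\circ\phi$ as $[\phi,\psi]$, and cancel the remaining conjugation via $\rho\circ[\phi,\psi]=[\phi,\psi]\circ\rho$ to obtain $[\theta,\psi]=\rho^{-1}\circ[\phi,\psi]\circ\rho=[\phi,\psi]$.

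I do not expect a genuine obstacle: the entire content of the lemma sits in the first step, namely recognizing that the displacement condition $h(A)\cap A=\emptyset$ is exactly what forces $\rho$ to be supported away from $A$, so that $h$ enters the argument only through this disjointness and not through any Hofer-like norm estimate. What remains is bookkeeping — keeping the commutator convention straight, using $\supp(f\circ g\circ f^{-1})=f(\supp g)$, and $\supp[\phi,\psi]\subseteq\supp\phi\cup\supp\psi$ — all routine.
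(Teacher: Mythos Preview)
Your proof is correct. The argument via the factorization $\theta=\phi\circ\rho$ with $\rho=h^{-1}\phi^{-1}h$ supported in $h^{-1}(A)$, hence disjoint from $A$, is clean, and the algebraic reduction $[\theta,\psi]=\rho^{-1}[\phi,\psi]\rho=[\phi,\psi]$ is valid.

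The paper takes a slightly different, more pointwise route: it verifies directly that $\theta|_A=\phi|_A$ and $\theta^{-1}|_A=\phi^{-1}|_A$ by chasing an arbitrary $x\in A$ through the composition, then checks separately that for $x\notin A$ one has $\theta(x)\notin A$, so that $\theta^{-1}\psi\theta$ and $\phi^{-1}\psi\phi$ agree both on $A$ and on its complement. Your approach packages exactly the same content in the single structural observation that $\rho$ has support disjoint from $A$ and therefore commutes with everything supported in $A$; this avoids the case split and the small contradiction argument the paper uses for $x\notin A$. Conversely, the paper's version makes the equality $\theta|_A=\phi|_A$ explicit, which some readers may find more transparent. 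Either way the mathematical content is identical.
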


\begin{proof}
If $x \in A$ then $h(x) \not\in A$. Hence,
\begin{eqnarray*}
\theta(x) & = & (\phi \circ h^{-1})(\phi^{-1}(h(x)))\\
          & = & \phi(h^{-1}(h(x))) \qquad \text{since } \supp(\phi^{-1}) \subset A\\
          & = & \phi(x),
\end{eqnarray*}
and we see that $\theta|_A = \phi|_A$.  Similarly, for $x \in A$ we have $\phi^{-1}(x) \in A$,
and hence $h(\phi^{-1}(x)) \not\in A$ since $h(A) \cap A = \emptyset$.
Thus,
\begin{eqnarray*}
\theta^{-1}(x) & = & h^{-1}(\phi(h(\phi^{-1}(x))))\\
               & = & h^{-1}(h(\phi^{-1}(x))) \qquad \text{since } \supp(\phi) \subset A\\
               & = & \phi^{-1}(x),
\end{eqnarray*}
and we see that $\theta^{-1}|_A = \phi^{-1}|_A$.
Thus, $(\phi^{-1}\circ \psi\circ \phi)(x) = (\theta^{-1}\circ \psi\circ \theta)(x)$
for all $x \in A$ since $\supp(\psi) \subset A$. 

Now, if $x \not\in A$ and $\theta(x) \in A$ we would have 
$x = \theta^{-1}(\theta(x)) = \phi^{-1}(\theta(x)) \in A$ since $\supp(\phi^{-1})\subset A$,
a contradiction. Hence, for $x \not\in A$ we have $\theta(x) \not\in A$ and  
 \[
(\phi^{-1}\circ \psi\circ \phi)(x) = x = (\theta^{-1}\circ \psi\circ \theta)(x)
 \]
since both $\phi$ and $\psi$ have support in $A$.
Therefore, $\phi^{-1}\circ \psi\circ \phi = \theta^{-1}\circ \psi\circ \theta$, and we have $[\phi, \psi] = [\theta, \psi]$.

\end{proof}

\begin{proof}[Proof of Theorem~\ref{theorem:displacement-positive} continued]

Following the proof of Theorem 2.4.A in \cite{PolThe} we assume there exists $ h \in D(A) \neq \emptyset$. Otherwise, we are done since 
$e_s(A) = +\infty$. 
The commutator $\theta$ is contained in $\Ham(M,\omega)$ because commutators are in the kernel of the Calabi invariant.
Since both $\theta$ and $\psi$ are in $\Ham(M,\omega)$ and the Hofer norm is conjugation invariant, we have
\begin{eqnarray*}
\|[\theta, \psi]\|_H & = & \|\psi^{-1}\circ \theta^{-1} \circ \psi \circ \theta\|_H\\
& \leq & \|\psi^{-1} \circ \theta^{-1} \circ \psi\|_H + \|\theta\|_H\\
& = & 2\|\theta\|_H. 
\end{eqnarray*}
By Buss and Leclercq's theorem \cite{BusPse} there is constant $\lambda > 0$ such that
\begin{align*}
\|\theta\|_H \leq \lambda \| \theta\|_{HL}. 
\end{align*}
Using the triangle inequality and the symmetry of $\| \cdot \|_{HL}$ we have
\begin{eqnarray*}\label{inequality:hofer-commutator}
\|[\theta,\psi]\|_{H} & \leq & 2\lambda \left(  \| \phi \circ h \circ \phi^{-1}\|_{HL} + \|h\|_{HL}\right)\\
& \leq & 2\lambda \left(k \|h\|_{HL} + \|h\|_{HL}\right),
\end{eqnarray*}
where $k > 0$ is the constant given by Theorem \ref{conjinequality}.
Therefore,
$$
0 <\frac{ \|[\theta,\psi]\|_{H}}{2\lambda(k+1)} \leq \|h\|_{HL}.
$$
Since this inequality holds for all $h \in D(A)$ we can take the
infimum over $D(A)$ to get
$$
0 <\frac{ \|[\theta,\psi]\|_{H}}{2\lambda(k+1)} \leq e_s(A).
$$
This completes the proof of Theorem~\ref{theorem:displacement-positive}.
\end{proof}

\begin{remark}
The proof of Theorem~\ref{theorem:eliashberg-polterovich} relied on the bi-invariance of 
the distance $d_H$, whereas the proof of Theorem~\ref{theorem:displacement-positive} relied
on the equivalence of the norms $h \mapsto \|\phi \circ h \circ \phi^{-1}\|_{HL}$ and
$\|\cdot\|_{HL}$, i.e. the invariance of $d_{HL}$ up to a constant.
\end{remark}


\section{Examples}

A harmonic 1-parameter group is an isotopy $\Phi = \{\phi_t\}$ generated by the vector field 
$V_{\mathcal{H}}$ defined by $i(V_{\mathcal{H}})\omega = \mathcal{H}$, where $\mathcal{H}$
is a harmonic 1-form. It is immediate from the definitions that
$$
l_0(\Phi) = l_0(\Phi^{-1}) = |\mathcal{H}|
$$
where $|\cdot|$ is a norm on the space of harmonic 1-forms.
Hence  $l(\Phi) = |\mathcal{H}|$. Therefore, if $\phi_1$ is the time one map of $\Phi$ we have
$$
\|\phi_1\|_{HL} \leq |\mathcal{H}|.
$$

For instance, take the torus $T^{2n}$ with coordinates $(\theta_1,\ldots ,\theta_{2n})$ and the flat Riemannian metric. Then all the 1-forms $d\theta_i$ are harmonic. Given $v = (a_1, \ldots ,a_n, b_1, \ldots , b_n) \in \R^{2n}$, the translation $x \mapsto x+v$ on $\R^{2n}$ induces a rotation $\rho_v$ on $T^{2n}$, which is a symplectic diffeomorphism.
Moreover, $x \mapsto x + tv$ on $\R^{2n}$ induces a harmonic 1-parameter group $\{\rho_v^t\}$
on $T^{2n}$. 

Taking the 1-forms $d\theta_i$ for $i=1, \ldots ,2n$ as basis for the space of harmonic 1-forms and using the standard symplectic form
$$
\omega = \sum_{j=1}^n d\theta_j \wedge d\theta_{j+n}
$$ 
on $T^{2n}$ we have
$$
i(\dot{\rho_v^t})\omega = \sum_{j=1}^n \left(a_j d\theta_{j+n} - b_j d\theta_j\right).
$$
Thus,
$$
l(\{\rho_v^t\}) = |(-b_1,\ldots, -b_n,a_1,\ldots, a_n)|
$$
where $|\cdot|$ is a norm on the space of harmonic 1-forms,
and we see that
$$
\|\rho_v\|_{HL} \leq |v|
$$
if we use $|v| = |a_1|+\cdots + |a_n| + |b_1| + \cdots + |b_n|$
as the norm on both $\mathbb{R}^{2n}$ and the space of harmonic 1-forms.

\smallskip
Consider the torus $T^2$ as the square:
\[
 \{(p,q) \mid 0\leq p \leq 1 \text{ and } 0\leq q \leq 1\} \subset \R^2
\]
with opposite sides identified. For any $r < \frac{1}{2}$ let 
\[ 
\tilde{A}(r) = \{(x,y) \mid 0\leq x < r\} \subset \mathbb{R}^2,
\] 
and let $A(r)$ be the corresponding subset in $T^2$. If $v = (a_1,0)$ with $r \leq a_1 \leq 1-r$,
then the rotation $\rho_v$ induced by the translation $(p,q) \mapsto (p + a_1, q)$ displaces $A(r)$.
Therefore, using the norm $|v| = |a_1| + |b_1|$ we have
$$
\|\rho_a\|_{HL} \leq l(\{\rho_a^t\}) = a_1.
$$
Since this holds for all $r \leq a_1$ we have,
$$
e_s(A(r)) \leq r.
$$

\begin{remark}
Note that in the above example the symplectic displacement energy is finite,
whereas the Hamiltonian displacement energy $e(A(r))$ is infinite.  This follows
from a result proved by Gromov \cite{GroPse}: If $(M,\omega)$ is a symplectic manifold
without boundary that is convex at infinity and $L\subset M$ is a compact Lagrangian
submanifold such that $[\omega]$ vanishes on $\pi_2(M,L)$, then for any Hamiltonian
symplectomorphism $\phi:M \rightarrow M$ the intersection $\phi(L)\cap L \neq
\emptyset$. Stronger versions of this result can be found in \cite{FloMor}, \cite{FloThe},
and \cite{FloCup}. See also Section 9.2 of \cite{McDJ-h}.
\end{remark}


\section{Application}

The following result is an immediate consequence of the positivity of the symplectic displacement
energy of non-empty open sets. For two isotopies $\Phi$ and $\Psi$ denote by $\Phi^{-1} \circ \Psi$ the
isotopy given at time $t$ by $(\Phi^{-1} \circ \Psi)_t = \phi^{-1}_t \circ \psi_t$.

\begin{theorem}\label{theorem:uniqueness}
Let $\Phi_n$ be a sequence of symplectic isotopies and let $\Psi$ be another symplectic isotopy.
Suppose that the sequence of time-one maps $\phi_{n,1}$ of the isotopies $\Phi_n$ converges uniformly
to a homeomorphism $\phi$, and $l(\Phi_n^{-1} \circ \Psi) \to 0$ as $n \to \infty$, then 
$\phi = \psi_1$.
\end{theorem}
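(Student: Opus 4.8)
The plan is to argue by contradiction, using the positivity of the symplectic displacement energy (Theorem~\ref{theorem:displacement-positive}) as the essential obstruction to $\phi \neq \psi_1$. Suppose $\phi \neq \psi_1$. Since both are homeomorphisms of $M$ and $\psi_1$ is a diffeomorphism, there is a point $x_0$ with $\phi(x_0) \neq \psi_1(x_0)$, and by continuity there is a small open ball $B$ around $x_0$ such that $\overline{\psi_1(B)}$ is disjoint from $\overline{\phi(B)}$; shrinking further we may take an open set $A$ (a slightly smaller ball) with $\overline{A} \subset B$ so that $\psi_1(\overline{A})$ and $\phi(\overline{A})$ have disjoint closures, and in fact so that $\phi(\overline A)$ is disjoint from $\overline A$ after composing with $\psi_1^{-1}$ — more precisely, set $A$ up so that $\psi_1^{-1}\phi(\overline A) \cap \overline A = \emptyset$. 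Then $e_s(A) > 0$ by Theorem~\ref{theorem:displacement-positive}.

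Next I would translate the convergence hypotheses into a statement about displacing $A$. Write $g_n = \phi_{n,1}$ and $g = \psi_1$. The hypothesis $l(\Phi_n^{-1}\circ\Psi) \to 0$ implies, since $\|\cdot\|_{HL} \le l$ applied to the isotopy $\Phi_n^{-1}\circ\Psi$ (whose time-one map is $g_n^{-1} g$), that $\|g_n^{-1} g\|_{HL} \to 0$, equivalently $d_{HL}(g_n, g) \to 0$ in the Hofer-like metric. On the other hand, $g_n \to \phi$ uniformly, so for $n$ large $g_n$ is $C^0$-close to $\phi$; combined with $g$-closeness encoded above, for large $n$ the map $g_n^{-1} g$ moves $A$ off itself. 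Indeed, $g_n^{-1}g(A)$ is $C^0$-close to $g^{-1}g(A) = A$ only if $g_n$ is close to $g$ in $C^0$, which we do \emph{not} know; what we do know is $g_n \to \phi$. So the right object is to compare $g$ with the uniform limit: $g_n^{-1} g \to \phi^{-1}g$ uniformly (using that inversion and composition are continuous for the $C^0$ topology on homeomorphisms of a compact manifold). Choosing $A$ at the start so that $\phi^{-1}g$ displaces $A$, i.e. $\phi^{-1}g(\overline A) \cap \overline A = \emptyset$ — which is possible because $\phi^{-1}g \neq \id$ means there is a point and a neighborhood moved off itself — we conclude that for all large $n$, $g_n^{-1}g$ also displaces $A$, since $C^0$-displacement of a set with compact closure is an open condition. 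Hence $g_n^{-1} g \in D(A)$ for large $n$, so $\|g_n^{-1}g\|_{HL} \ge e_s(A) > 0$ for all large $n$, contradicting $\|g_n^{-1}g\|_{HL} \to 0$.

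I would then assemble these into a clean contradiction: the assumption $\phi \neq \psi_1$ forces $\phi^{-1}\psi_1 \neq \id$, hence there is a nonempty open $A$ with $\overline A$ compact that is displaced by $\phi^{-1}\psi_1$; uniform convergence $\phi_{n,1} \to \phi$ gives $\phi_{n,1}^{-1}\circ\psi_1 \to \phi^{-1}\circ\psi_1$ uniformly, so $\phi_{n,1}^{-1}\circ\psi_1$ displaces $A$ for $n$ large; positivity of $e_s(A)$ gives a uniform lower bound $\|\phi_{n,1}^{-1}\circ\psi_1\|_{HL} \ge e_s(A) > 0$; but $\|\phi_{n,1}^{-1}\circ\psi_1\|_{HL} \le l(\Phi_n^{-1}\circ\Psi) \to 0$. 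This is the contradiction, so $\phi = \psi_1$.

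The main obstacle, and the step that requires care, is the passage ``$\phi^{-1}\psi_1$ displaces some nonempty open $A$ with $\overline A$ compact, hence so does $\phi_{n,1}^{-1}\psi_1$ for large $n$.'' One must verify that displacement of a set with compact closure is stable under small $C^0$-perturbations of the displacing homeomorphism: if $f(\overline A) \cap \overline A = \emptyset$ and $f_n \to f$ uniformly, then since $\overline A$ and $f(\overline A)$ are disjoint compact sets they are a positive distance $\delta$ apart, and for $n$ large $\sup_{x}d(f_n(x), f(x)) < \delta$, whence $f_n(\overline A) \cap \overline A = \emptyset$, so in particular $f_n(A)\cap A = \emptyset$. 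Here $M$ is compact so the metric $g$ is complete and the $C^0$ distance behaves well. One also needs that $\phi^{-1}\psi_1 \neq \id$ genuinely yields such an $A$: pick $x_0$ with $\phi^{-1}\psi_1(x_0) \neq x_0$, let $2\rho = d(x_0, \phi^{-1}\psi_1(x_0)) > 0$, and by continuity choose $A$ to be the open $g$-ball of radius less than $\rho/2$ about $x_0$ intersected with a sufficiently small neighborhood so that $\phi^{-1}\psi_1(\overline A)$ lies in the ball of radius $\rho/2$ about $\phi^{-1}\psi_1(x_0)$; then $\overline A$ and $\phi^{-1}\psi_1(\overline A)$ are disjoint. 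Everything else is bookkeeping with the inequality $\|\cdot\|_{HL} \le l$ and continuity of group operations in the $C^0$ topology.
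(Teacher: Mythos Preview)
Your proof is correct and follows essentially the same approach as the paper: assume $\phi \neq \psi_1$, find an open set displaced by $\phi^{-1}\circ\psi_1$, use uniform convergence to get that $\phi_{n,1}^{-1}\circ\psi_1$ displaces it for large $n$, and derive a contradiction between the positive lower bound $e_s(A)$ and the upper bound $\|\phi_{n,1}^{-1}\circ\psi_1\|_{HL} \leq l(\Phi_n^{-1}\circ\Psi) \to 0$. You supply more detail than the paper on two points it leaves implicit (why a non-identity homeomorphism displaces some open ball, and why displacement of a set with compact closure is $C^0$-stable), but the argument is the same.
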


This theorem can be viewed as a justification for the following,
which appeared in \cite{BanTheg} and \cite{BanOnt}.

\begin{definition}
A homeomorphism $h$ of a compact symplectic manifold is called a {\bf strong symplectic homeomorphism} if there exist a sequence 
$\Phi_n$ of symplectic isotopies such that $\phi_{n,1}$ converges uniformly to $h$, and $l(\Phi_n)$ is a Cauchy sequence.
\end{definition}

\begin{proof}[Proof of Theorem~\ref{theorem:uniqueness}]
Suppose $\phi \neq \psi_1$, i.e. $\phi^{-1} \circ \psi_1 \neq \id$.
Then there exists a small open ball $B$ such that $(\phi^{-1} \circ \psi_1) (B) \cap B = \emptyset$.
Since $\phi_{n,1}$ converges uniformly to $\phi$, $((\phi_{n,1})^{-1} \circ \psi_1)(B) \cap B = \emptyset$ 
for $n$ large enough.
Therefore, the symplectic energy $e_s(B)$ of $B$ satisfies 
\[
e_s(B) \leq \|(\phi_{n,1})^{-1} \circ \psi_1 \|_{HL} \leq l(\Phi_n^{-1} \circ \Psi).
\]
The later tends to zero, which contradicts the positivity of $e_s(B)$.
\end{proof}

\begin{remark}
This theorem was first proved by Hofer and Zehnder for $M = \mathbb{R}^{2n}$ \cite{HofSym}, and then by 
Oh-M{\"u}ller in \cite{OhThe} for Hamiltonian isotopies using the same lines as above, and very 
recently by Tchuiaga \cite{TchOns}, using the $L^\infty$ version of the Hofer-like norm.
\end{remark}

\bigskip\noindent
{\bf Acknowledgments}

\noindent
We would like to thank the referee for carefully reading an earlier version of this paper
and providing the statement of Theorem \ref{conjinequality}.


\nocite{BanOnt}
\nocite{BanSur}
\nocite{BanThe}
\nocite{EliBii}
\nocite{HofSym}
\nocite{WarFou}

\bibliography{books,papers}
\bibliographystyle{amsalpha}
\end{document}